\let\oldchi\chi
\renewcommand*{\chi}{\protect\raisebox{2pt}{$\oldchi$}}
\DeclareMathOperator{\Hom}{Hom} 
\newtheorem*{prop*}{Proposition}
\newtheorem*{lemma*}{Lemma}
\newtheorem*{theorem*}{Theorem}
\theoremstyle{remark}
\theoremstyle{definition}
\newtheorem*{defn*}{Definition}
\theoremstyle{definition}
\newtheorem*{exmp*}{Example}
\theoremstyle{definition}
\numberwithin{equation}{section}
\newcommand{\doubleoverline}[1]{{\overline{\overline{#1}}}}
\title{Lowest-degree triple Massey products in moment-angle complexes}
\author{Jelena Grbi\'{c}}
\address{School of Mathematics, University of Southampton, UK}
\email{J.Grbic@soton.ac.uk}
\author{Abigail Linton}
\address{School of Mathematics, University of Southampton, UK}
\email{A.Linton@soton.ac.uk}
\begin{document}

\maketitle


\begin{abstract}
	We give a combinatorial classification of non-trivial triple Massey products of three dimensional classes in the cohomology of a moment-angle complex.
	This result improves on \cite[Theorem~6.1.1]{DenhamSuciu} by considering triple Massey products with non-trivial indeterminacy.
\end{abstract}

Massey products are higher cohomology operations that refine cup products. 
The first examples of non-trivial Massey products in moment-angle complexes were by Baskakov \cite{Baskakov}, who gave an infinite family of moment-angle complexes with non-trivial triple Massey products. 
For moment-angle complexes we give a classification of non-trivial triple Massey products of classes in the lowest degree (degree three).
Additionally this provides the smallest examples of Massey products in moment-angle complexes in which non-trivial indeterminacy can occur. 
This classification is important for recent results, see for example~\cite[Proposition 4.9]{rigidity}.

Let $\mathcal{K}$ be a simplicial complex on $[m]$ vertices. Following~\cite{ToricTopology}, the \textit{moment-angle complex} $\mathcal{Z}_\mathcal{K}$ is 
	\[
	 \mathcal{Z}_\mathcal{K}=\bigcup\limits_{I\in \mathcal{K}} \left( D^2, S^1 \right)^I \subset (D^2)^m
	\]
	where 
	$
	(D^2, S^1)^I = 
	\prod_{i=1}^{m} Y_i
	$
	for $Y_i=D^2$ if $i\in I$, and $Y_i=S^1$ if~$i\notin I$.

As a subspace of the polydisc, $\mathcal{Z}_\mathcal{K}$ has a cellular decomposition that induces a multigrading on $C^*(\mathcal{Z}_\mathcal{K})$. 
The \textit{full subcomplex} $\mathcal{K}_J$ is $\{\sigma\in \mathcal{K} \ | \ \sigma\subset J\}$ for $J\subset [m]$.
Let $\widetilde{C}^{*}(\mathcal{K}_J)$ be the augmented simplicial cochain complex. 
The following theorem is a combination of results by Hochster \cite{Hochster}, Buchstaber-Panov \cite{TorusActionsCombinatorics}, and Baskakov \cite{BaskakovProduct}.
All coefficients are in $\mathbf{k}$, which is a field or $\mathbb{Z}$. 
\begin{theorem*}[Hochster's formula]\cite{Hochster, TorusActionsCombinatorics, BaskakovProduct}
	There is an isomorphism of cochains $\widetilde{C}^{*-1}(\mathcal{K}_J)\to C^{*-|J|, 2J}(\mathcal{Z}_\mathcal{K})\subset C^{*+|J|}(\mathcal{Z}_\mathcal{K})$ that induces an isomorphism of algebras 
	$
	H^*(\mathcal{Z}_\mathcal{K}) \cong \bigoplus_{J \subset [m]} \widetilde{H}^*(\mathcal{K}_J),
	$
	where $\widetilde{H}^{-1}(\mathcal{K}_{\varnothing})=\mathbf{k}$.
\end{theorem*}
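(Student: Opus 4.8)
The plan is to realise the cellular cochain complex of $\mathcal{Z}_\mathcal{K}$ explicitly and split it along the $\mathbb{Z}^m$-multigrading induced by the product cell structure on $(D^2)^m$. First I would give $D^2$ the CW structure with a single cell $a$, $b$, $c$ in dimensions $0$, $1$, $2$, where $b=\partial D^2=S^1$; then $(D^2)^m$ carries the product structure whose cells are the products $x_1\cdots x_m$ with each $x_i\in\{a_i,b_i,c_i\}$. For such a cell put $I=\{i:x_i=c_i\}$ (the disc coordinates) and $L=\{i:x_i=b_i\}$ (the circle coordinates); the cell lies in $\mathcal{Z}_\mathcal{K}$ exactly when $I\in\mathcal{K}$, it has cohomological degree $2|I|+|L|$, and I give it the $\mathbb{Z}^m$-multidegree $2(I\sqcup L)$. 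Since $\partial c_i=b_i$ and $\partial b_i=\partial a_i=0$, the cellular boundary sends the cell indexed by $(I,L)$ to a signed sum of the cells indexed by $(I\setminus\{i\},L\cup\{i\})$ for $i\in I$, which preserves both $J:=I\sqcup L$ and the multidegree. Hence the cellular cochain complex decomposes as a direct sum over $J\subseteq[m]$ of its multidegree-$2J$ summands $C^{*,2J}(\mathcal{Z}_\mathcal{K})$; equivalently, with a cellular diagonal taken to be the (shuffled) tensor product of a fixed cellular diagonal on each $D^2$ factor, the cochain dga $C^*(\mathcal{Z}_\mathcal{K})$ is the quotient of $\Lambda[u_1,\dots,u_m]\otimes\mathbf{k}[v_1,\dots,v_m]$ by the relations $v_i^2=0$, $u_iv_i=0$, and $v_{j_1}\cdots v_{j_k}=0$ for $\{j_1,\dots,j_k\}\notin\mathcal{K}$, with $\deg u_i=1$, $\deg v_i=2$, both of multidegree $2e_i$, and differential $du_i=v_i$.

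Next I would identify each summand. For fixed $J$ the cells of multidegree $2J$ are indexed by the faces $I\in\mathcal{K}_J$ (taking $L=J\setminus I$), the face $I$ sitting in cohomological degree $|I|+|J|$; by the boundary formula, the dual cochain differential raises $|I|$ by adjoining a vertex, so it is precisely the coboundary of the augmented simplicial cochain complex of the full subcomplex $\mathcal{K}_J$, reindexed by the shift $|J|+1$. This gives the stated isomorphism of cochain complexes $\widetilde{C}^{*-1}(\mathcal{K}_J)\to C^{*-|J|,2J}(\mathcal{Z}_\mathcal{K})\subset C^{*+|J|}(\mathcal{Z}_\mathcal{K})$, the empty face corresponding to $\widetilde{H}^{-1}(\mathcal{K}_\varnothing)=\mathbf{k}$, that is, to $1\in C^0(\mathcal{Z}_\mathcal{K})$. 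Passing to cohomology yields the additive part of the decomposition $H^*(\mathcal{Z}_\mathcal{K})\cong\bigoplus_{J\subseteq[m]}\widetilde{H}^*(\mathcal{K}_J)$; since the splitting already takes place at the level of cochain complexes of free $\mathbf{k}$-modules, it holds for $\mathbf{k}=\mathbb{Z}$ as well as for a field.

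For the multiplication I would argue as follows. In the model of the first paragraph $u_i^2=u_iv_i=v_i^2=0$, so multiplying a class of multidegree $2J_1$ by one of multidegree $2J_2$ gives zero whenever $J_1\cap J_2\neq\varnothing$. When $J_1\cap J_2=\varnothing$, I would compute the cup product using the product diagonal above --- the factor diagonal being $\Delta c=a\otimes c+c\otimes a+b\otimes b$, whose $b\otimes b$ term produces the degree shift --- restricted to $\mathcal{Z}_\mathcal{K}$ and dualised. Tracking multidegrees shows that the product $C^{*,2J_1}(\mathcal{Z}_\mathcal{K})\otimes C^{*,2J_2}(\mathcal{Z}_\mathcal{K})\to C^{*,2(J_1\sqcup J_2)}(\mathcal{Z}_\mathcal{K})$ corresponds, under the identifications above, to the simplicial cochain cross product $\widetilde{C}^{*}(\mathcal{K}_{J_1})\otimes\widetilde{C}^{*}(\mathcal{K}_{J_2})\to\widetilde{C}^{*}(\mathcal{K}_{J_1}*\mathcal{K}_{J_2})$ into the join, followed by restriction along the inclusion of full subcomplexes $\mathcal{K}_{J_1\sqcup J_2}\hookrightarrow\mathcal{K}_{J_1}*\mathcal{K}_{J_2}$; on cohomology this is exactly the canonical product of Hochster's decomposition.

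The additive statement is essentially bookkeeping, and I expect the main obstacle to be the multiplicative one: a priori the cellular cup product need only respect the total degree, so the real work is to check that for a coassociative, sign-consistent choice of diagonal approximation it respects the finer multigrading and reproduces precisely the join-and-restrict maps above (and that these assemble into an associative, graded-commutative product). If the signs become unwieldy I would instead identify $H^*(\mathcal{Z}_\mathcal{K})$ with $\operatorname{Tor}_{\mathbf{k}[v_1,\dots,v_m]}(\mathbf{k}[\mathcal{K}],\mathbf{k})$ as bigraded algebras, where $\mathbf{k}[\mathcal{K}]$ is the Stanley--Reisner ring, via the Koszul resolution and a multiplicative Eilenberg--Moore argument, and then invoke Hochster's original computation of these $\operatorname{Tor}$-modules together with Baskakov's identification of the product structure.
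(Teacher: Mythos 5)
A preliminary remark: the paper does not prove this statement at all --- it is quoted from Hochster, Buchstaber--Panov and Baskakov --- so there is no internal proof to compare against. Your outline is essentially the standard argument from those references: the product cell structure on $(D^2)^m$, the induced $\mathbb{Z}^m$-multigrading on cellular cochains, the identification of the multidegree-$2J$ summand with $\widetilde{C}^{*}(\mathcal{K}_J)$ shifted by $|J|+1$, and Baskakov's ``cross product into the join, then restrict to $\mathcal{K}_{J_1\sqcup J_2}$'' description of the multiplication. The additive part of your sketch is correct.

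There is, however, one concrete step that fails as written: your choice of diagonal approximation $\Delta c = a\otimes c + c\otimes a + b\otimes b$ on each disc factor. Dualising, the $b\otimes b$ term gives $u_i\cup u_i=\pm v_i$ in that factor, which contradicts the presentation you state in the same paragraph (in a quotient of $\Lambda[u_1,\dots,u_m]\otimes\mathbf{k}[v_1,\dots,v_m]$ one has $u_i^2=0$), and, more importantly, it destroys exactly the property your multiplicative step hinges on: with this diagonal the cochain-level product does not preserve the multigrading (the product of two cochains of multidegree $2e_i$ can land in multidegree $2e_i$ rather than being zero), so ``tracking multidegrees'' does not produce a map $C^{*,2J_1}(\mathcal{Z}_\mathcal{K})\otimes C^{*,2J_2}(\mathcal{Z}_\mathcal{K})\to C^{*,2(J_1\sqcup J_2)}(\mathcal{Z}_\mathcal{K})$, and the join-and-restrict description does not follow. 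Note that $\Delta c = a\otimes c + c\otimes a + b\otimes b$ and $\Delta c = a\otimes c + c\otimes a$ are both chain maps, chain homotopic to each other, but your argument needs a statement on the nose at cochain level, so the choice matters: you must use (or cite) the explicit cellular approximation of Buchstaber--Panov, whose induced chain map has no $b\otimes b$ term. With that choice $u_i^2=u_iv_i=v_i^2=0$ holds on cochains, the product is multigraded, and the degree shift $\widetilde{C}^{p-1}(\mathcal{K}_{J_1})\otimes\widetilde{C}^{q-1}(\mathcal{K}_{J_2})\to\widetilde{C}^{p+q-1}(\mathcal{K}_{J_1\sqcup J_2})$ comes automatically from the bookkeeping $|L\cup M|=|L|+|M|$ for the disc-coordinates of the cells --- it is not ``produced'' by a $b\otimes b$ term. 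Your fallback via $\operatorname{Tor}_{\mathbf{k}[v_1,\dots,v_m]}(\mathbf{k}[\mathcal{K}],\mathbf{k})$, Hochster's computation and Baskakov's product is fine and is in effect what the paper does by citation, but if you want the statement for $\mathbf{k}=\mathbb{Z}$ you should argue through the cochain-level quasi-isomorphism rather than an Eilenberg--Moore argument over a field.
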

	
The cochain group $C^{p}(\mathcal{K}_J)=\Hom (C_p(\mathcal{K}_J), \mathbf{k})$ has a basis of $\chi_L$ for a $p$-simplex $L \in \mathcal{K}_J$,  where $\chi_L$ takes the value $1$ on $L$ and $0$ otherwise. 
Let $\varepsilon (j, J)=(-1)^{r-1}$ for $j$ the $r$th element of $J$, and for $L\subset J$, let
$
\varepsilon(L, J)=\prod_{j\in L} \varepsilon (j, J)$.
The product on $\bigoplus_{J\subset [m]} \widetilde{H}^*(\mathcal{K}_J)$ is induced by $C^{p-1} (\mathcal{K}_I) \otimes C^{q-1} (\mathcal{K}_J) \to C^{p+q-1} (\mathcal{K}_{I\cup J})$,
\begin{equation*} \label{eq: cochain product}
\chi_L \otimes \chi_M \mapsto 
\left\{ \begin{array}{lr}
c_{L\cup M}\; \chi_{L\cup M} & \text{if } I\cap J = \varnothing,  \\
0 & \text{otherwise} 
\end{array} \right.
\end{equation*}
where
$
c_{L\cup M} = \varepsilon(L,I) \;\varepsilon(M,J) \;\zeta\; \varepsilon(L\cup M, I\cup J)
$
and $
\zeta= \prod_{k\in I\setminus L} \varepsilon(k,k\cup J\setminus M)$.

\begin{defn*}
	Let $(A, d)$ be a differential graded algebra with $\alpha_i\in H^{p_i}(A)$ for $i=1,2,3$ such that $\alpha_1 \alpha_2=0$ and $\alpha_2 \alpha_3=0$. For $a\in A^p$, let $\overline{a}=(-1)^{1+p}a$ and let $a_i\in A^{p_i}$ be a representative for $\alpha_i$. 
	The {\it triple Massey product} $\langle \alpha_1, \alpha_2, \alpha_3 \rangle \subset H^{p_1+p_2+p_3-1}[A]$ is the set of classes represented by 
	$\overline{a}_{1}a_{23}+ \overline{a}_{12}a_{3} \in A^{p_1+p_2+p_3-1}$,
	for $a_{i,i+1} \in A^{p_{i}+p_{i+1}-1}$ such that $d(a_{i,i+1})=\overline{a}_{i}a_{i+1}$, $i=1, 2$ and it does not depend on the representative $a_i$ for $\alpha_i$. 
	A triple Massey product is \textit{trivial} if it contains $0$.
The \textit{indeterminacy} of a triple Massey product is the set of differences between elements in $\langle \alpha_1, \alpha_2, \alpha_3 \rangle$.
\end{defn*}

We compute Massey products $\langle \alpha_1, \alpha_2, \alpha_3 \rangle \subset H^*(\mathcal{Z}_\mathcal{K})$ in terms of $\mathcal{K}$ by using the isomorphism in Hochster's formula.
When indeterminacy is trivial, Denham and Suciu \cite{DenhamSuciu} showed that there is a non-trivial Massey product $\langle \alpha_1,\alpha_2, \alpha_3 \rangle$ for $\alpha_i\in H^3(\mathcal{Z}_\mathcal{K})$ if and only if the one-skeleton $\mathcal{K}^{(1)}$ contains a full subcomplex isomorphic to one of the first $6$ graphs in Figure~\ref{fig: obstruction graphs}. 
Those six graphs do not capture Massey products with non-trivial indeterminacy.

\begin{exmp*} \label{ex: Non-trivial indeterminacy triple Massey example}
	Let $\mathcal{K}$ be the graph in Figure~\ref{fig: both examples}b, where the dashed edge $\{4,6\}$ is optional. 
	Let $\alpha_1, \alpha_2, \alpha_3 \in H^3(\mathcal{Z}_\mathcal{K})$ correspond to $\alpha_1=[\chi_1]\in \widetilde{H}^0(\mathcal{K}_{12})$, $\alpha_2=[\chi_3]\in \widetilde{H}^0(\mathcal{K}_{34})$, $\alpha_3=[\chi_5] \in \widetilde{H}^0(\mathcal{K}_{56})$.
	Since $\widetilde{H}^1(\mathcal{K}_{1234})=0$ and $\widetilde{H}^1(\mathcal{K}_{3456})=0$,  the products $\alpha_1\alpha_2\in\widetilde{H}^1(\mathcal{K}_{1234})$ and $\alpha_2\alpha_3\in$ $\widetilde{H}^1(\mathcal{K}_{3456})$ are zero.
	
	For $a\in C^p(\mathcal{K}_J)$, which corresponds to $a\in C^{p+|J|+1}(\mathcal{Z}_\mathcal{K})$, denote $\doubleoverline{a}=(-1)^{p+|J|}a$.
	A cochain $a_{12}\in C^0(\mathcal{K}_{1234})$ such that $d(a_{12})=\doubleoverline{\chi_1} \chi_3=0$ is of the form $a_{12}=c_1 \chi_3+c_2(\chi_1+\chi_4+\chi_2)$ for any $c_1, c_2\in \mathbf{k}$.
	A cochain $a_{23}\in C^0(\mathcal{K}_{3456})$ such that $d(a_{23})=\doubleoverline{\chi_3}\cdot \chi_5=\chi_{35}$ is of the form $a_{23}=c_3\chi_4 +c_4(\chi_6+\chi_3+\chi_5) +\chi_5$ for any $c_3, c_4\in \mathbf{k}$, where $c_3=c_4$ if $\{4,6\}\in \mathcal{K}$.
	The associated cocycle $\omega\in C^1(\mathcal{K})$ is 
	$\omega=\doubleoverline{a}_1a_{23}+\doubleoverline{a}_{12}a_3=
	c_3 \chi_{14} + c_4(\chi_{16}+\chi_{15})+\chi_{15}+c_1 \chi_{35} + c_2(\chi_{15}+\chi_{25})
	$.
	Since $d(\chi_5)=\chi_{15}+\chi_{35}+\chi_{25}$ and $d(\chi_1)=-\chi_{16}-\chi_{14}-\chi_{15}$ for $\chi_1, \chi_5\in C^0(\mathcal{K})$, $\omega=(c_3-c_4)\chi_{14}-c_4d(\chi_1)+\chi_{15}+(c_1-c_2)\chi_{35}+c_2d(\chi_5)$.
	Also $[\omega]=[(c_3-c_4)\chi_{14}+\chi_{15}+(c_1-c_2)\chi_{35}]\neq 0$ for any $c_1, c_2, c_3, c_4\in \mathbf{k}, c_3=c_4 \text{ if } \{4,6\}\in \mathcal{K}$. 
	Therefore $\langle \alpha_1, \alpha_2, \alpha_3 \rangle\subset H^8(\mathcal{Z}_\mathcal{K})$ is non-trivial with non-trivial indeterminacy, 
	$ 
	\alpha_1 \cdot \widetilde{H}^0(\mathcal{K}_{3456}) + \alpha_3 \cdot \widetilde{H}^0(\mathcal{K}_{1234}). 
	$

	Calculations are similar for $\mathcal{K}$ in Figure~\ref{fig: both examples}a, where
	$a_{12}=\chi_3+c_1(\chi_1+\chi_2+\chi_3+\chi_4)$ and $a_{23}=\chi_5+c_2(\chi_3+\chi_4+\chi_5+\chi_6)$, $c_1, c_2\in \mathbf{k}$.
	In this case, $\omega=-\chi_{25}+(c_1+1)d(\chi_5)-c_2d(\chi_1)$ so $\langle \alpha_1, \alpha_2, \alpha_3 \rangle$ only contains the non-zero class $[\omega]=[-\chi_{25}]$.
\end{exmp*}

\begin{theorem*} \label{thm: obstruction graphs}
	There is a non-trivial triple Massey product $\langle \alpha_1, \alpha_2, \alpha_3 \rangle\subset H^8(\mathcal{Z}_\mathcal{K})$ for $\alpha_1, \alpha_2, \alpha_3 \in H^3(\mathcal{Z}_\mathcal{K})$ if and only if the one-skeleton $\mathcal{K}^{(1)}$ of $\mathcal{K}$ contains a full subcomplex isomorphic to a graph in Figure~\ref{fig: obstruction graphs}. 
\end{theorem*}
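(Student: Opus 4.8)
The plan is to reduce, by means of Hochster's formula and the naturality of Massey products, to a finite combinatorial statement about subgraphs of the complete tripartite graph $K_{2,2,2}$, and then to settle that statement by a case analysis. By Hochster's formula $H^3(\mathcal{Z}_\mathcal{K})=\bigoplus_{\{u,v\}\notin\mathcal{K}}\widetilde H^0(\mathcal{K}_{\{u,v\}})$, so a representative cochain for each $\alpha_i$ is a sum of $0$-cochains $\chi_u$ supported on non-edges; the Massey representative $\doubleoverline{a}_1a_{23}+\doubleoverline{a}_{12}a_3$ is then a sum of products of two $0$-cochains, hence a $1$-cochain, and the product lies in $\bigoplus_{|J|=6}\widetilde H^1(\mathcal{K}_J)\subseteq H^8(\mathcal{Z}_\mathcal{K})$. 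Whenever two of the relevant supports meet, the intersection clause in the cochain product forces both $\doubleoverline{a}_1a_{23}$ and $\doubleoverline{a}_{12}a_3$ to vanish (one may take $a_{12}$, resp.\ $a_{23}$, to be $0$), so the product is trivial; using multilinearity of Massey products and tracking supports, one reduces to the case that each $\alpha_i$ is the class $[\chi_{a_i}]\in\widetilde H^0(\mathcal{K}_{\{a_i,b_i\}})$ of a single non-edge with the three non-edges $\{a_i,b_i\}$ pairwise disjoint. Put $J=\{a_1,b_1,a_2,b_2,a_3,b_3\}$. Since $\mathcal{Z}_{\mathcal{K}_J}$ is a retract of $\mathcal{Z}_\mathcal{K}$ with a section $s$ (induced by the coordinate projection and its splitting), naturality gives $s^*\langle r^*\alpha_1,r^*\alpha_2,r^*\alpha_3\rangle\subseteq\langle\alpha_1,\alpha_2,\alpha_3\rangle$, so $\langle\alpha_1,\alpha_2,\alpha_3\rangle$ is non-trivial in $\mathcal{Z}_\mathcal{K}$ if and only if the corresponding product is non-trivial in $\mathcal{Z}_{\mathcal{K}_J}$ --- the section is precisely what lets one conclude non-triviality upward in spite of the larger indeterminacy in $\mathcal{Z}_\mathcal{K}$. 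As each $\{a_i,b_i\}$ is a non-edge, $\mathcal{K}^{(1)}_J$ is a subgraph $G$ of $K_{2,2,2}$ with parts $\{a_i,b_i\}$; the vanishing of $\alpha_1\alpha_2$ and $\alpha_2\alpha_3$ (detected in $\widetilde H^1$ of the $4$-vertex full subcomplexes, which are subgraphs of a $4$-cycle and so $1$-dimensional) and the condition that $\omega=\doubleoverline{a}_1a_{23}+\doubleoverline{a}_{12}a_3$ be a coboundary in $C^1(\mathcal{K}_J)$ (a condition on $d\colon C^0\to C^1$) depend only on $G$. Thus the theorem reduces to: for which subgraphs $G\subseteq K_{2,2,2}$, up to the evident symmetries (swapping the two vertices within each part, and reversing the order of the three parts), is $\langle[\chi_{a_1}],[\chi_{a_2}],[\chi_{a_3}]\rangle$ defined and non-trivial?

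For the implication $(\Leftarrow)$ it then suffices to verify that each graph in Figure~\ref{fig: obstruction graphs} carries such a non-trivial product and to invoke the retraction argument for an arbitrary $\mathcal{K}$ containing it as a full subcomplex: the six Denham--Suciu graphs are covered by \cite[Theorem~6.1.1]{DenhamSuciu}, and the new graphs by the computation performed in the Example above (Figure~\ref{fig: both examples}) together with its analogues for any remaining graphs in the list.

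For $(\Rightarrow)$ one runs the classification. Definedness forces the bipartite graph $B_{12}$ between $\{a_1,b_1\}$ and $\{a_2,b_2\}$, and likewise $B_{23}$, to miss at least one edge of $K_{2,2}$. For each admissible $G$ one writes down the general solutions $a_{12},a_{23}\in C^0$ of $d(a_{12})=\doubleoverline{a}_1a_2$ and $d(a_{23})=\doubleoverline{a}_2a_3$ --- a one- or two-parameter family each, exactly as in the Example --- forms the resulting family of cocycles $\omega\in C^1(G)$, and checks whether some member becomes a coboundary, equivalently whether a small linear system over $\mathbf{k}$ in the parameters has a solution. A case analysis organised by the isomorphism types of $B_{12}$ and $B_{23}$ and their interaction through the third bipartite graph $B_{13}$ shows that no $\omega$ is a coboundary precisely for the graphs in the figure, so that non-triviality forces $G$ (hence $\mathcal{K}^{(1)}$) to contain such a graph as a full subcomplex.

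The main obstacle is this last step in the presence of non-trivial indeterminacy: in \cite{DenhamSuciu} the six graphs are exactly those whose Massey products have forced-zero (hence empty-difference) indeterminacy, whereas here one must show that for every $G$ outside the list at least one of the many representatives $\omega$ is a coboundary, while for the listed graphs none is. Keeping the dependence of $\omega$ on the parameters and on the coboundaries $d(\chi_v)$, $v\in J$, transparent --- so that "$[\omega]=0$ for some parameters" is visibly a graph-theoretic condition --- is what makes the argument tractable, and it is exactly where the new graphs (those carrying a non-trivial product whose indeterminacy is non-zero but proper) enter.
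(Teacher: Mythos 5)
Your framework is the same as the paper's: Hochster's formula to place each $\alpha_i$ in $\widetilde H^0$ of a non-edge, triviality whenever the supports meet, the retraction of $\mathcal{Z}_{\mathcal{K}_J}$ off $\mathcal{Z}_\mathcal{K}$ plus naturality to reduce to six vertices, and then a combinatorial classification of the admissible $6$-vertex graphs (your $K_{2,2,2}$-subgraph formulation is just the complement-graph picture the paper uses, since the three non-edges become the edges $\{1,2\},\{3,4\},\{5,6\}$ of $G$). The genuine gap is that the decisive step is only announced, not performed: you write that ``a case analysis organised by the isomorphism types of $B_{12}$ and $B_{23}$ \ldots shows that no $\omega$ is a coboundary precisely for the graphs in the figure,'' but producing that list \emph{is} the theorem. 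The paper's proof consists exactly of this analysis: it first rules out a path $v_1,v_2,v_3$ in $G$ through a single vertex of $S_2$ (choose $a_1=\chi_{v_1}$, $a_2=\chi_{v_2}$, $a_3=\chi_{v_3}$, $a_{12}=a_{23}=0$, so $\omega=0$), then labels a path $1,\ldots,6$ in $G$ and eliminates the possible extra edges one by one ($\{2,5\}$, then $\{1,5\}$ when $\{1,3\}\in G$, then $\{1,6\}$ when $\{1,3\},\{4,6\}\in G$) by exhibiting explicit representatives and defining cochains for which $\omega$ is exact, while checking that the remaining optional edges do not alter the Example's computation; this is what pins $G$ down to the two families of Figure~\ref{fig: graph complement} and hence $\mathcal{K}^{(1)}$ to Figure~\ref{fig: obstruction graphs}. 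Without carrying out these eliminations (and verifying non-triviality of $[\omega]$ for every admissible choice of $a_{12},a_{23}$ on the surviving graphs, which is where the non-trivial indeterminacy must be controlled), neither implication of the classification is established.

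Two smaller points. First, your reduction to the case where each $\alpha_i$ is supported on a single non-edge invokes ``multilinearity of Massey products,'' but Massey products are not linear in their arguments; this reduction has to be argued through the multigraded structure of $C^*(\mathcal{Z}_\mathcal{K})$ (as the paper implicitly does), not by linearity of the operation. Second, for the ($\Leftarrow$) direction you appeal to the Example and its ``analogues,'' but the full subcomplex $\mathcal{K}_J$ may contain $2$-simplices (several of the obstruction graphs contain triangles), so one must note, as the paper does, that the cocycles $\omega$ produced in the Example remain non-exact after adding $2$-simplices; deferring the six Denham--Suciu graphs to \cite{DenhamSuciu} is also not quite enough, since that result is stated under the assumption of trivial indeterminacy, whereas here non-triviality must be shown for the full Massey product set.
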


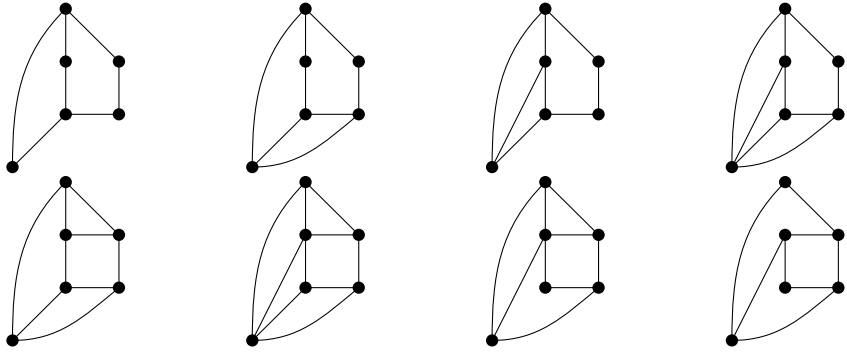
\begin{figure}[h]
	\centering
	\begin{minipage}{0.24\textwidth}
		\centering
		\begin{tikzpicture}
		\coordinate (a) at (0,0);
		\coordinate (b) at (0.7,0);
		\coordinate (c) at (0,0.7);
		\coordinate (d) at (0.7,0.7);
		\coordinate (e) at (0,1.4);
		\coordinate (f) at (-0.7,-0.7);
		
		\draw (f) -- (a) -- (b) -- (d) --(e) -- (c) -- (a);
		\draw (f)  to [out=90,in=-135] (e);
		
		\fill (a) circle (2.3pt);\fill (b) circle (2.3pt);\fill (c) circle (2.3pt);
		\fill (d) circle (2.3pt);\fill (e) circle (2.3pt);\fill (f) circle (2.3pt);
		\end{tikzpicture} 
	\end{minipage}
	\begin{minipage}{0.24\textwidth}
		\centering
		\begin{tikzpicture}
		\coordinate (a) at (0,0);
		\coordinate (b) at (0.7,0);
		\coordinate (c) at (0,0.7);
		\coordinate (d) at (0.7,0.7);
		\coordinate (e) at (0,1.4);
		\coordinate (f) at (-0.7,-0.7);
		
		\draw (f) -- (a) -- (b) -- (d) --(e) -- (c) -- (a);
		\draw (f) to [out=90,in=-135] (e);
		\draw (f) to [out=0,in=-140] (b);
		
		\fill (a) circle (2.3pt);\fill (b) circle (2.3pt);\fill (c) circle (2.3pt);
		\fill (d) circle (2.3pt);\fill (e) circle (2.3pt);\fill (f) circle (2.3pt);
		\end{tikzpicture} 
	\end{minipage}
	\begin{minipage}{0.24\textwidth}
		\centering
		\begin{tikzpicture}
		\coordinate (a) at (0,0);
		\coordinate (b) at (0.7,0);
		\coordinate (c) at (0,0.7);
		\coordinate (d) at (0.7,0.7);
		\coordinate (e) at (0,1.4);
		\coordinate (f) at (-0.7,-0.7);
		
		\draw (f) -- (a) -- (b) -- (d) --(e) -- (c) -- (a);
		\draw (f) to [out=90,in=-135] (e);
		\draw (f) -- (c);
		
		\fill (a) circle (2.3pt);\fill (b) circle (2.3pt);\fill (c) circle (2.3pt);
		\fill (d) circle (2.3pt);\fill (e) circle (2.3pt);\fill (f) circle (2.3pt);
		\end{tikzpicture} 
		
	\end{minipage}
	\begin{minipage}{0.24\textwidth}
		\centering
		\begin{tikzpicture}
		\coordinate (a) at (0,0);
		\coordinate (b) at (0.7,0);
		\coordinate (c) at (0,0.7);
		\coordinate (d) at (0.7,0.7);
		\coordinate (e) at (0,1.4);
		\coordinate (f) at (-0.7,-0.7);
		
		\draw (f)-- (a) -- (b) -- (d) --(e) -- (c) -- (a);
		\draw (f)  to [out=90,in=-135] (e);
		\draw (f) to [out=0,in=-140] (b);
		\draw (f) -- (c);
		
		\fill (a) circle (2.3pt);\fill (b) circle (2.3pt);\fill (c) circle (2.3pt);
		\fill (d) circle (2.3pt);\fill (e) circle (2.3pt);\fill (f) circle (2.3pt);
		\end{tikzpicture} 
	\end{minipage}\\
	\begin{minipage}{0.24\textwidth}
		\centering
		\begin{tikzpicture}
		\coordinate (a) at (0,0);
		\coordinate (b) at (0.7,0);
		\coordinate (c) at (0,0.7);
		\coordinate (d) at (0.7,0.7);
		\coordinate (e) at (0,1.4);
		\coordinate (f) at (-0.7,-0.7);
		
		\draw (f) -- (a) -- (b) -- (d) --(e) -- (c) -- (a);
		\draw (f)  to [out=90,in=-135] (e);
		\draw (f) to [out=0,in=-140] (b);
		\draw (c) -- (d);
		
		\fill (a) circle (2.3pt);\fill (b) circle (2.3pt);\fill (c) circle (2.3pt);
		\fill (d) circle (2.3pt);\fill (e) circle (2.3pt);\fill (f) circle (2.3pt);
		\end{tikzpicture} 
	\end{minipage}
	\begin{minipage}{0.24\textwidth}
		\centering
		\begin{tikzpicture}
		\coordinate (a) at (0,0);
		\coordinate (b) at (0.7,0);
		\coordinate (c) at (0,0.7);
		\coordinate (d) at (0.7,0.7);
		\coordinate (e) at (0,1.4);
		\coordinate (f) at (-0.7,-0.7);
		
		\draw (f) -- (a) -- (b) -- (d) --(e) -- (c) -- (a);
		\draw (f)  to [out=90,in=-135] (e);
		\draw (f) to [out=0,in=-140] (b);
		\draw (f) -- (c) -- (d);
		
		\fill (a) circle (2.3pt);\fill (b) circle (2.3pt);\fill (c) circle (2.3pt);
		\fill (d) circle (2.3pt);\fill (e) circle (2.3pt);\fill (f) circle (2.3pt);
		\end{tikzpicture} 
	\end{minipage}
	\begin{minipage}{0.24\textwidth}
		\centering
		\begin{tikzpicture}
		\coordinate (a) at (0,0);
		\coordinate (b) at (0.7,0);
		\coordinate (c) at (0,0.7);
		\coordinate (d) at (0.7,0.7);
		\coordinate (e) at (0,1.4);
		\coordinate (f) at (-0.7,-0.7);
		
		\draw (a) -- (b) -- (d) --(e) -- (c) -- (a);
		\draw (f)  to [out=90,in=-135] (e);
		\draw (f) to [out=0,in=-140] (b);
		\draw (f) -- (c) -- (d);
		
		\foreach \i in {a, b, c, d, e, f} {\fill (\i) circle (2.3pt);}
		\end{tikzpicture} 
	\end{minipage}
	\begin{minipage}{0.24\textwidth}
		\centering
		\begin{tikzpicture}
		\coordinate (a) at (0,0);
		\coordinate (b) at (0.7,0);
		\coordinate (c) at (0,0.7);
		\coordinate (d) at (0.7,0.7);
		\coordinate (e) at (0,1.4);
		\coordinate (f) at (-0.7,-0.7);
		
		\draw (a) -- (b) -- (d) --(e); \draw (c) -- (a);
		\draw (f)  to [out=90,in=-135] (e);
		\draw (f) to [out=0,in=-140] (b);
		\draw (f) -- (c) -- (d);
		
		\foreach \i in {a, b, c, d, e, f} {\fill (\i) circle (2.3pt);}
		\end{tikzpicture} 
	\end{minipage}
	\caption{The eight obstruction graphs}
	\label{fig: obstruction graphs}
\end{figure}

\begin{figure}[h!]
	\centering
	\begin{minipage}{0.4\textwidth}
		\centering
		\begin{tikzpicture}[scale=1, inner sep=2mm]
		\coordinate (3) at (0,0);
		\coordinate (5) at (1,0);
		\coordinate (6) at (0,1);
		\coordinate (2) at (1,1);
		\coordinate (4) at (0,2);
		\coordinate (1) at (-1,-1);
		\coordinate (a) at (-1.5, 0.5);
		
		\draw (1) -- (3) -- (5) -- (2) --(4) -- (6)  -- (3);
		\draw (1)  to [out=90,in=-135] (4);
		\draw[dashed] (1) to [out=0,in=-140] (5);
		\draw[dashed] (1) -- (6);
		\draw[dashed] (2)--(6);
		
		\foreach \i in {1, ..., 6} {\fill (\i) circle (2.5pt);}
		
		\draw (1) node[left] {$1$};
		\draw (2) node[right] {$2$};
		\draw (3) node[below] {$3$};
		\draw (4) node[right] {$4$};
		\draw (5) node[right] {$5$};
		\draw (6) node[left] {$6$};
		\draw (a) node[left] {(a)};
		\end{tikzpicture} 
		\label{fig: trivial indeterminacy example}
	\end{minipage} 
	\begin{minipage}{0.4\textwidth}
		\centering
		\begin{tikzpicture}[scale=1, inner sep=2mm]
		\coordinate (3) at (0,0);
		\coordinate (5) at (1,0);
		\coordinate (6) at (0,1);
		\coordinate (2) at (1,1);
		\coordinate (4) at (0,2);
		\coordinate (1) at (-1,-1);
		\coordinate (a) at (-1.5, 0.5);
		
		\draw (2)  -- (6)--(3) -- (5) -- (2) --(4) ;
		\draw (1)  to [out=90,in=-135] (4);
		\draw (1) to [out=0,in=-140] (5);
		\draw (1) -- (6);
		\draw[dashed] (4)--(6);
		
		\foreach \i in {1, ..., 6} {\fill (\i) circle (2.5pt);}
		
		\draw (1) node[left] {$1$};
		\draw (2) node[right] {$2$};
		\draw (3) node[below] {$3$};
		\draw (4) node[right] {$4$};
		\draw (5) node[right] {$5$};
		\draw (6) node[left] {$6$};
		\draw (a) node[left] {(b)};
		\end{tikzpicture} 
		\label{fig: non-trivial indeterminacy example}
	\end{minipage}\qquad	
	\caption{Massey products with trivial (a) and non-trivial (b) indeterminacy.}
	\label{fig: both examples}
\end{figure}
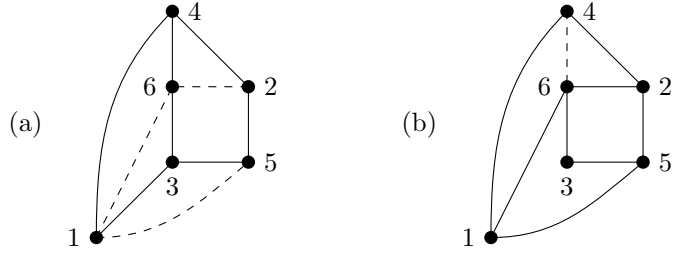

\begin{proof}
	As $\mathcal{Z}_{\mathcal{K}_J}$ retracts off $\mathcal{Z}_\mathcal{K}$ \cite{stephensnotes}, it is sufficient to prove the Theorem when $\mathcal{K}$ has six vertices. 
	Let $\mathcal{K}^{(1)}$ be a graph in Figure~\ref{fig: obstruction graphs}.  Then $\langle \alpha_1, \alpha_2, \alpha_3 \rangle$ is~{non-trivial} as
	calculations in the Example are not affected by $2$-simplices in $\mathcal{K}$, and $\dim(\mathcal{K})\leqslant 2$.

	Conversely, suppose $\langle \alpha_1, \alpha_2, \alpha_3 \rangle$ is non-trivial for $\alpha_1, \alpha_2, \alpha_3 \in H^{3}(\mathcal{Z}_\mathcal{K})$. 
	Let $G$ on $[6]$ be the edge complement graph of $\mathcal{K}^{(1)}$, so $\{i,j\}\in G$ if and only if $\{i,j\}\notin \mathcal{K}$.
	We will show that $G$ is one of the graphs in Figure~\ref{fig: both examples}, where the dashed edges are optional. 
	By Hochster's formula, there are full subcomplexes $\mathcal{K}_{S_i}$ for $S_i\subset [m]$, $|S_i|=2$ such that $\alpha_i$ corresponds to $\alpha_i\in \widetilde{H}^0(\mathcal{K}_{S_i})$. 
	Since $\mathcal{K}_{S_i}$ is a pair of disjoint vertices, $\{v_i, v_i'\}\notin \mathcal{K}$ for any $v_i, v_i'\in S_i$.
	Since $\langle \alpha_1, \alpha_2, \alpha_3 \rangle$ is non-trivial, $S_i\cap S_j=\varnothing$ for $i\neq j$. 
	Let $S_1=\{1,2\}, S_2=\{3,4\}, S_3=\{5,6\}$. 
	Then the graph $G$ contains the edges $\{1, 2\}, \{3, 4\}, \{5, 6\}$. 
	Since $\alpha_i \alpha_{i+1}=0$, the full subcomplex $\mathcal{K}_{S_i\cup S_{i+1}}$ does not contain a cycle  for $i=1,2$.
	Thus there exist edges 
	$\{v_1, v_2\}, \{v_2', v_3\}\in G$ for $v_i, v_i'\in S_i$. 
	
	Suppose $\{v_1,v_2\}, \{v_2, v_3\}\in G$ for $v_i\in S_i$. 
	Let $a_1=\chi_{v_1}\in C^0(\mathcal{K}_{12})$, $a_2=\chi_{v_2}\in C^0(\mathcal{K}_{34})$, $a_3=\chi_{v_3}\in C^0(\mathcal{K}_{56})$ be representing cocycles for $\alpha_1, \alpha_2, \alpha_3$. 
	So $a_1a_2=\chi_{v_1 v_2}=0$ and $a_2a_3=\chi_{v_2v_3}=0$. 
	For $a_{12}=0=a_{23}$, $\omega=0$, 
	which contradicts the non-triviality of $\langle \alpha_1, \alpha_2, \alpha_3 \rangle$. 
	Thus $\{v_1,v_2\}, \{v_2, v_3\}\notin G$ for $v_i\in S_i$. 
	
	\begin{figure}[h]
		\centering
		\begin{minipage}{0.4\textwidth}
			\centering
			\begin{tikzpicture}[scale=1, inner sep=2mm]
			\coordinate (a) at (0,0);
			\coordinate (b) at (0,1);
			\coordinate (c) at (0,2);
			\coordinate (d) at (1,2);
			\coordinate (e) at (1,1);
			\coordinate (f) at (1,0);
			\coordinate (label) at (-0.8, 1);
			
			\draw (a) node[left] {1} -- (b) node[left]{2} -- (c)node[left]{3} -- (d) node[right]{4} --(e) node[right]{5} -- (f) node[right]{6};
			\draw[dashed] (b) -- (f) -- (a) -- (e);
			\draw (label) node[left] {(a)};
			
			\foreach \i in {a, ..., f} {\fill (\i) circle (2.5pt);}
			\end{tikzpicture} 
		\end{minipage}
		\begin{minipage}{0.4\textwidth}
			\centering
			\begin{tikzpicture}[scale=1, inner sep=2mm]
			\coordinate (a) at (0,0);
			\coordinate (b) at (0,1);
			\coordinate (c) at (0,2);
			\coordinate (d) at (1,2);
			\coordinate (e) at (1,1);
			\coordinate (f) at (1,0);
			\coordinate (label) at (-1, 1);
			
			\draw (a) node[below left, inner sep=1mm] {1} -- (b) node[left]{2} -- (c)node[above left, inner sep=1mm]{3} -- (d) node[above right, inner sep=1mm]{4} --(e) node[right]{5} -- (f) node[below right, inner sep=1mm]{6};
			\draw plot [smooth, tension=1.5] coordinates { (a) (-0.7,1) (c) };
			\draw[dashed] plot [smooth, tension=1.5] coordinates { (d) (1.7,1) (f) };
			
			\draw (label) node[left] {(b)};
			
			\foreach \i in {a, ..., f} {\fill (\i) circle (2.5pt);}
			\end{tikzpicture} 
		\end{minipage}
		\caption{Edge complement graphs $G$,  dashed edges optional.}
		\label{fig: graph complement}
	\end{figure}
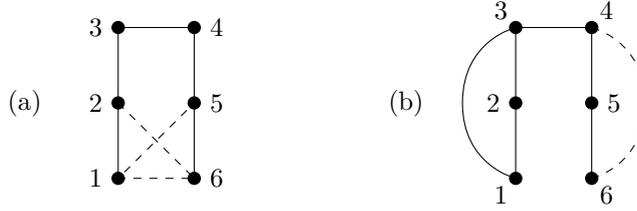

	Label the vertices of $G$ so that there is a path $1, \ldots,  6$. 
	Consider the case when $\{1,3\}, \{4,6\}\notin G$. Since $\{v_1,v_2\}, \{v_2, v_3\} \notin G$ for $v_i\in S_i$, the vertices $3$ and $4$ have valency two. 
	Suppose $\{2,5\}\in G$. 
	Let $\chi_2\in C^0(\mathcal{K}_{12}), \chi_3\in C^0(\mathcal{K}_{34}), \chi_5\in C^0(\mathcal{K}_{56})$ represent $\alpha_1, \alpha_2, \alpha_3$, respectively. 
	Since $a_1a_2=0$, let $a_{12}=0$ and let $a_{23}=\chi_5$. 
	Then 
	$\omega=\chi_{25}$ is zero, contradicting the non-triviality of $\langle \alpha_1, \alpha_2, \alpha_3 \rangle$. 
	Hence $\{2,5\}\notin G$ and $G$ is the graph in Figure~\ref{fig: graph complement}a, 
	where $\{1,5\}$, $\{1,6\}$ and $\{2,6\}$ are optional. 
	
	There are three more cases. 
	When $\{1,3\}\in G$ and $\{4,6\}\notin G$, it is necessary that $\{1,5\}\notin G$ otherwise in the Example, 
	$\omega=\chi_{15}$ is zero. 
	Also $\{2,5\}\notin G$ as in the previous case. 
	The edges $\{1,6\}, \{2,6\}$ in $G$ are optional as they do not change the calculations in the Example. 
	If $\{1,6\}, \{2,6\}\notin G$, then $G$ is the graph in Figure~\ref{fig: graph complement}b with $\{4,6\}\notin G$. 
	For other selections of $\{1,6\}, \{2,6\}$, $G$ is isomorphic to a graph in Figure~\ref{fig: graph complement}a. 
	
	When $\{4,6\}\in G$ and $\{1,3\}\notin G$, $G$ is symmetric to the case when $\{1,3\}\in G$ and $\{4,6\}\notin G$, so up to isomorphism we obtain the same graphs. 
	Finally when $\{1,3\}, \{4,6\} \in G$, then $\{1,5\}, \{2,5\}, \{2,6\} \notin G$ for the same reasons as in the last two cases. 
	Let $\alpha_1, \alpha_2, \alpha_3$ be represented by $\chi_1\in C^0(\mathcal{K}_{12})$, $\chi_3\in C^0(\mathcal{K}_{34})$, $\chi_6\in C^0(\mathcal{K}_{56})$, respectively. 
	Let $a_{12}=0$, $a_{23}=\chi_6$. Then 
	$\omega=\chi_{16}$ and therefore $\{1,6\}\notin G$. 
	So $G$ is the graph in Figure~\ref{fig: graph complement}b with $\{4,6\}\in G$. 
	
	The graphs in Figure~\ref{fig: graph complement}a and Figure~\ref{fig: graph complement}b are edge complement graphs to the graphs in Figure~\ref{fig: both examples}a and Figure~\ref{fig: both examples}b, respectively.
	Up to graph isomorphism, all graphs in Figure~\ref{fig: both examples} are exactly those in Figure~\ref{fig: obstruction graphs}. 
	Thus Figure~\ref{fig: obstruction graphs} contains all $1$-skeletons of simplicial complexes $\mathcal{K}$ on six vertices with a non-trivial triple Massey product in $H^8(\mathcal{Z}_\mathcal{K})$. 
\end{proof}

\begin{lemma*}
	Not one of the graphs in Figure~\ref{fig: obstruction graphs} is isomorphic to another.
\end{lemma*}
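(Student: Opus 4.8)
The plan is to distinguish the eight graphs pairwise using invariants preserved by graph isomorphism: the number of edges, the degree sequence, and the number of triangles ($3$-cycles). Reading the edge sets off Figure~\ref{fig: obstruction graphs} from left to right and top to bottom, label the graphs $G_1,\dots,G_8$. First I would record, for each $G_i$, its number of edges; these turn out to be $7,8,8,9,9,10,9,8$ respectively. Hence $G_1$ (the unique graph with $7$ edges) and $G_6$ (the unique one with $10$ edges) are immediately non-isomorphic to every other graph in the list, and it remains only to separate the $8$-edge graphs $G_2,G_3,G_8$ from one another and the $9$-edge graphs $G_4,G_5,G_7$ from one another.

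For the three $8$-edge graphs the degree sequence is the same in each case, namely $(3,3,3,3,2,2)$, so it carries no information; instead I would count triangles. A short enumeration over the edge set shows that $G_2$ has exactly one triangle, $G_3$ has exactly two, and $G_8$ has none, so these three are pairwise non-isomorphic. For the three $9$-edge graphs I would first compare degree sequences: $G_5$ is $3$-regular, whereas $G_4$ and $G_7$ each contain a vertex of degree $4$, so $G_5$ is isomorphic to neither. Then counting triangles distinguishes $G_4$ (three triangles) from $G_7$ (two). Combining these observations, no two of $G_1,\dots,G_8$ agree on all of (number of edges, degree sequence, number of triangles), hence no two are isomorphic.

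The argument is entirely elementary, and the only real work is careful bookkeeping. I expect the main obstacle to be reading the edge sets correctly off the TikZ diagrams in the first place — in particular noticing that $G_7$ is obtained from $G_6$ by deleting one edge and $G_8$ from $G_7$ by deleting a further edge — and then enumerating the triangles without error. I would make the triangle counts transparent by writing down the adjacency list of each relevant graph and, for every edge, checking whether its two endpoints have a common neighbour, so that each triangle is counted exactly once; since none of the graphs has more than three triangles, I could alternatively just list all triangles explicitly.
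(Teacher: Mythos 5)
Your proposal is correct, and it takes essentially the same route as the paper: distinguish the eight graphs pairwise by elementary isomorphism invariants (your edge counts, degree sequences and triangle counts all check out against the figure, e.g.\ $7,8,8,9,9,10,9,8$ edges and triangle counts $1,2,0$ for the three $8$-edge graphs and $3$ versus $2$ for the two $9$-edge graphs with a degree-$4$ vertex). The paper does the same kind of bookkeeping, only using valency sequences plus adjacency/distance of the exceptional-valency vertices where you use edge and triangle counts.
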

\begin{proof}
	Label the eight graphs $a, b, c, d$ along the top row and $e, f, g, h$ along the bottom.
	Two graphs are not isomorphic if their vertices have different valencies. 
	For each graph, we list the valency of the vertices.
	\begin{align*}
		a: 3, 3, 2, 2, 2, 2 \qquad
		b: 3, 3, 3, 3, 2, 2 \qquad
		c: 3, 3, 3, 3, 2, 2 \qquad
		d: 4, 3, 3, 3, 3, 2 \\
		e: 3, 3, 3, 3, 3, 3 \qquad
		f: 4, 4, 3, 3, 3, 3 \qquad
		g: 4, 3, 3, 3, 3, 2 \qquad
		h: 3, 3, 3, 3, 2, 2
	\end{align*}
	Thus graphs $a, e, f$ are not isomorphic to any of the other graphs.
	Also the graphs $d$ and $g$ are not isomorphic because the vertices of valency $2$ and $4$ are adjacent in $g$ but not adjacent in $d$.
	The graph $c$ is different to $b, h$ because the two vertices of valency $2$ are adjacent in graph $c$ but not adjacent in $b$ or $h$.
	The graph $b$ is different to $h$ because the two vertices of valency $2$ are at a minimal distance of $2$ from each other, that is, there is one vertex in between them in $b$. In $h$, these two vertices are at a minimal distance of $3$ from each other.
	Therefore each of these graphs is not isomorphic to another.
\end{proof}

\bibliographystyle{abbrv}
\bibliography{minibibliography} 

\end{document}